\numberwithin{equation}{section}
\definecolor{titlepagecolor}{cmyk}{1,.60,0,.40}
\patchcmd{\subsection}{\normalfont}{\normalfont\color{blue}}{}{}
\DeclareFixedFont{\titlefont}{T1}{ppl}{b}{it}{0.5in}
\def\th@plain{%
  \thm@notefont{}
  \itshape 
}
\def\th@definition{%
  \thm@notefont{}
  \normalfont 
}
\DeclareMathOperator\bd{\partial\Omega}
\def\tre{\textcolor{black}}
\theoremstyle{definition}
\newtheorem{defn}{Definiton}[section]
\newtheorem{theorem}[defn]{Theorem}
\newtheorem{cor}[defn]{Corollary}
\newtheorem{lemma}[defn]{Lemma}
\newtheorem{prop}[defn]{Proposition}
\newtheorem{remark}[defn]{Remark}
\begin{document}
\title[Inverse problem for polyharmonic operator in two dimensions]{Determination of lower order perturbations of a polyharmonic operator in two dimensions}
\subjclass[2020]{35R30; 35J40  }
\keywords{Inverse boundary value problem; Perturbed two-dimensional polyharmonic operator; Cauchy data; Uniqueness. }

\author[Bansal]{Rajat Bansal}
\address{Rajat Bansal\\ Department of Mathematics, Indian Institute of Science Education and Research, Pune}
\email{rajat.bansal@students.iiserpune.ac.in}

\author[Krishnan]{Venkateswaran P. Krishnan}
\address{Venkateswaran P. Krishnan\\ Centre for Applicable Mathematics, Tata Institute of Fundamental Research, Bengaluru.}
\email{vkrishnan@tifrbng.res.in}

\author[Pattar]{Rahul Raju Pattar}
\address{Rahul Raju Pattar\\ Centre for Applicable Mathematics, Tata Institute of Fundamental Research, Bengaluru.}
\email{rahul22@tifrbng.res.in}

\vspace*{-1cm}

 \maketitle
    
\begin{abstract}
We study an inverse boundary value problem for a polyharmonic operator in two dimensions. We show that the Cauchy data uniquely determine all the anisotropic perturbations of orders at most $m-1$ and several perturbations of orders $m$ to $2m-2$ under some restriction. The uniqueness proof relies on the $\bar\partial$-techniques and the method of stationary phase.
\end{abstract}

\section{Introduction}

\noindent Let $\Omega$ be a simply connected bounded domain with smooth boundary in $\mathbb{R}^2$. This paper aims to study the inverse boundary value problem in two dimensions for a polyharmonic operator of the form: 
\begin{equation}\label{sec1:Eq1}
  \mathcal{L} =  \partial^{m}\bar{\partial}^{m} + \sum_{j,k = 0}^{m-1}{A_{j,k}\partial^{j}\bar{\partial}^{k}}, \quad m\geq 2.
\end{equation} 
Note that $x = (x_1,x_2) \in \Omega \subset \mathbb{R}^2$ is identified with $z= x_1+ i x_2 \in \mathbb{C}$ and  $\partial = \frac{1}{2} \left( \partial_{x_1} - i \partial_{x_2} \right),$ $\bar \partial = \frac{1}{2} \left( \partial_{x_1} + i \partial_{x_2} \right)$. \\

 \noindent The polyharmonic operators have applications in physics and geometry, for example the Kirchhoff-Love plate equation in the theory of elasticity and the study of the Paneitz-Branson operator in conformal geometry; see \cite{GFC91}.\\

 \noindent  In this paper we uniquely determine the coefficients $A_{j,k}$ from the set of Cauchy data
  \begin{equation}\label{CL2}
		\mathcal{C}(\mathcal{L}) = \left\{  \left( u|_{\partial\Omega}, \partial_{\nu}u|_{\partial\Omega}, \partial_{\nu}^{2}u|_{\partial\Omega}  \ldots, \partial_\nu^{(2m-1)}u|_{\partial\Omega}   \right):  u \in H^{2m}(\Omega),  \mathcal{L}u =0\right\},
\end{equation}
\noindent where $\nu$ is an outer unit normal to $\bd$.\\

\noindent To the best of our knowledge, there are few prior works investigating inverse problems for lower order perturbation of polyharmonic operator (especially for $m\geq 3$) in two dimensions compared to higher dimensions. For $m=2$, in \cite{Ik91} Ikehata proved a local uniqueness of a potential $V\in L^2(\Omega)$ associated to the Calder\'{o}n problem for $(\Delta^2+V)u=0$ with $\|V\|_{L^2}$ small. In \cite{Ik93}, the same author studied a local uniqueness theorem for the Calder\'{o}n problem for a perturbed  biharmonic operator related to Love-Kirchhoff plate theory. In \cite{Ik94}, the author studied the relationship between two D-N maps in the linear theory of elasticity in two dimensions and the linearized Calderon problem for anisotropic fourth order equation. In \cite{IY15, LUW15}, the authors have studied Navier-Stokes equation in two dimensions using the biharmonic operator.\\ 

 \noindent In dimensions $n \geq 3,$ Krupchyk, Lassas, and Uhlmann \cite{KLU14} established that the Cauchy data for a polyharmonic operator uniquely determines first order perturbations. This work was extended by many authors; see, for instance, \cite{Gk, BG19,BKS21, BG}. Till now, the perturbations considered for the polyharmonic operator are of order at most $m$ in $n \geq 3$. Moreover, in \cite{SS23}, the authors considered the linearized Calder\'on inverse problem for polyharmonic operator and recovered several lower order coefficients up to order $2m-1$ in $n\geq 3$.\\
 
\noindent In this paper, we establish that the Cauchy data for a polyharmonic operator in two dimensions uniquely determines all anisotropic perturbations of order at most $m-1$ and  several perturbations of orders $m$ to $2m-2$ with some restrictions. This restriction is captured in the following representation of the operator $\mathcal{L}$ as
\[(\partial \bar{\partial})^{m} + A_{m-1,m-1}(\partial\bar{\partial})^{m-1} + \sum_{l=1}^{m-2}{\left(\sum_{j+k=m-l-1}{A_{j+l,k+l}\partial^j\bar\partial^k}\right)(\partial\bar\partial)^l } + \sum_{l=0}^{m-1}\sum_{j + k = l}{A_{j,k}\partial^{j}\bar{\partial}^{k}}.
\]
The constraint on the coefficients of orders $m$ to $2m-2$ is required for the techniques employed in this paper to work, mainly, to make the equation for the amplitude of complex geometric optics (CGO) solutions to be independent of the coefficients.\\

 \noindent An early study of inverse boundary value problems for second-order operators was carried out by Calder\'{o}n \cite{C}.  Sylvester and Uhlmann \cite{SU} obtained the first uniqueness result for the Calder\'{o}n problem. In two dimensions, the Calder\'on problem was studied by Nachman \cite{N}, Brown-Uhlmann \cite{BU}, and finally Astala–P\"{a}iv\"{a}rinta \cite{AP}. Nachman required two derivatives to convert the conductivity equation into the Schrödringer equation. The paper of Astala and P\"{a}iv\"{a}rinta solved Calderón’s problem most generally for $L^\infty$ conductivity. \\
 
 \noindent Our approach relies on two main techniques - the $\bar\partial$-techniques and the method of stationary phase. These techniques were first used by Bukhgeim in his seminal work \cite{Buk} to recover the zeroth order perturbation of the Laplacian in two dimensions that has led to many developments in the study of two-dimensional inverse boundary value problems. However, his proof only gives uniqueness for potentials in the class $W^{1,p}$, $p > 2$ as pointed out in Bl\r{a}sten’s licentiate thesis \cite{EB}. The corresponding partial data problem was considered by Imanuvilov, Uhlmann and Yamamoto in  \cite{IUY} and Guillarmou and Tzou in \cite{GT2}. The inverse problem for magnetic Schr\"{o}dinger equation in $n = 2$ using Bukhgeim approach was studied by Lai in \cite{lai2011} and for partial data by Imanuvilov, Uhlmann and Yamamoto in \cite{IUY2012} and by Tzou in  \cite{tzou}. We refer the interested reader to the survey article \cite{GT13} for a detailed overview of results and techniques for inverse problems in two dimensions.\\

\noindent Now we state the main theorem of this paper.

\begin{theorem}\label{MT}
Let $\Omega$ be a simply connected bounded domain with smooth boundary in $\mathbb{R}^2$.
Let $\mathcal{L}$ and $\tilde{\mathcal{L}}$ be two operators  of the form \eqref{sec1:Eq1} with coefficients $A_{j,k} , \tilde A_{j,k} \in W^{j+ k + 1,p}(\Omega),\ p>2$, respectively. Assume that 
\begin{equation}\label{zerocond}
  \partial_{\nu}^{l}{A}_{j,k} = \partial_{\nu}^{l}{\tilde{A}}_{j,k} \text{ and } {A}_{0,k} = \tilde{A}_{0,k}  \text{ on } \partial\Omega, \quad \text{ for } 0 \leq l \leq j -1, \ 0\leq j,k \leq m-1 .
\end{equation}
Then $\mathcal{C}(\mathcal{L}) = \mathcal{C}(\tilde{\mathcal{L}})$ implies that $A_{j,k} = \tilde{A}_{j,k}$ on $\Omega$ for $0 \leq j,k \leq m-1.$
\end{theorem}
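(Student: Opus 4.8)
The plan is to adapt Bukhgeim's $\bar\partial$-method to the $2m$-th order operator: reduce $\mathcal{C}(\mathcal{L})=\mathcal{C}(\tilde{\mathcal{L}})$ to a family of integral identities, feed in complex geometric optics (CGO) solutions concentrating at an arbitrary interior point, and read off the coefficients there by the method of stationary phase. Write $B_{j,k}=A_{j,k}-\tilde A_{j,k}$, and rewrite $\mathcal{L}$ in the displayed form of the introduction, so that the coefficients of orders $m$ to $2m-2$ sit in front of powers of $\partial\bar\partial$. Given $u\in H^{2m}(\Omega)$ with $\mathcal{L}u=0$, the hypothesis $\mathcal{C}(\mathcal{L})=\mathcal{C}(\tilde{\mathcal{L}})$ yields $\tilde u$ with $\tilde{\mathcal{L}}\tilde u=0$ and the same Cauchy data \eqref{CL2}, so $w=u-\tilde u$ has vanishing traces $\partial_\nu^{l}w|_{\partial\Omega}$ for $0\le l\le 2m-1$ and solves $\tilde{\mathcal{L}}w=(\tilde{\mathcal{L}}-\mathcal{L})u=\sum_{j,k}(\tilde A_{j,k}-A_{j,k})\partial^{j}\bar\partial^{k}u$. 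Pairing against any $v$ with $\tilde{\mathcal{L}}^{*}v=0$ and applying Green's formula for the $2m$-th order operator, all boundary terms vanish, leaving
\[
\sum_{j,k=0}^{m-1}\int_{\Omega}B_{j,k}\,\partial^{j}\bar\partial^{k}u\;\overline{v}\,dx=0
\]
for every such $u$ and $v$; the flatness conditions \eqref{zerocond} are what allow one to discard the boundary contributions that arise when the derivatives in this identity are later redistributed onto the coefficients, and to treat the $B_{j,k}$ near $\partial\Omega$.

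\emph{CGO construction.} Fix an arbitrary $z_0\in\Omega$ and use the Bukhgeim phase $\Phi(z)=(z-z_0)^{2}$: it is holomorphic, and $\mathrm{Im}\,\Phi$ has a single nondegenerate critical point at $z_0$, of signature $0$. I seek $u=e^{\Phi/h}(a+r_h)$ with $\mathcal{L}u=0$ and $v=e^{-\Phi/h}(b+\tilde r_h)$ with $\tilde{\mathcal{L}}^{*}v=0$ as $h\to 0$. Conjugation gives $e^{-\Phi/h}(\partial^{m}\bar\partial^{m})e^{\Phi/h}=(\partial+\Phi'/h)^{m}\bar\partial^{m}$, and since every coefficient of order $\ge m$ stands in front of a power of $\partial\bar\partial$, the ansatz $\bar\partial^{m}a=0$ makes the whole principal part and all those higher-order terms either vanish or drop by a power of $h$; thus the leading amplitude equation is the coefficient-free condition $\bar\partial^{m}a=0$, and symmetrically $\bar\partial^{m}b=0$. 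I take $a,b$ of the form $\bar z^{\,p}g(z)$ with $g$ holomorphic and $0\le p\le m-1$, and construct $r_h,\tilde r_h$ from a solvability (Carleman-type) estimate for the conjugated operators with enough gain in $h$ that $\|r_h\|,\|\tilde r_h\|$ are small in a Sobolev norm strong enough to survive $m-1$ further derivatives in the next step; the regularity $A_{j,k}\in W^{j+k+1,p}$, $p>2$, is exactly what this construction (and the expansion below) needs.

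\emph{Stationary phase and conclusion.} Substituting the CGO pair into the identity and using $\overline{v}=e^{-\bar\Phi/h}(\overline{b}+\overline{\tilde r_h})$, the product $u\,\overline{v}$ carries the oscillatory factor $e^{(\Phi-\bar\Phi)/h}=e^{2i\,\mathrm{Im}\Phi/h}$, so the method of stationary phase localizes every integral at $z_0$. Because $\partial^{j}\bar\partial^{k}u=e^{\Phi/h}(\partial+\Phi'/h)^{j}\bar\partial^{k}(a+r_h)$ and $\Phi'(z_0)=0$, the summand carrying $B_{j,k}$ enters the $h$-expansion at a definite order controlled by $j$, with leading coefficient a universal constant times $(\bar\partial^{k}a)(z_0)\,\overline{b(z_0)}\,B_{j,k}(z_0)$ (plus lower-order data). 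Choosing the amplitudes $a=\bar z^{k}g(z)$ so that only the $\bar\partial^{k}$-derivative survives, varying $g$ and $b$, and comparing successive orders in $h$, one extracts the coefficients by a double induction — outer on $k$ ascending, inner on $j$ descending from $m-1$ — obtaining $B_{j,k}(z_0)=0$ for all $0\le j,k\le m-1$. Since $z_0\in\Omega$ is arbitrary and the $B_{j,k}$ are continuous, $A_{j,k}=\tilde A_{j,k}$ on $\Omega$, which is the assertion of the theorem.

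\emph{Main obstacle.} The crux is the CGO construction: one must establish a solvability/Carleman estimate for the degenerate conjugated polyharmonic operator $(h\partial+\Phi')^{m}\bar\partial^{m}+(\text{lower order})$ that is uniform as $h\to 0$ in spite of $\Phi'$ vanishing at $z_0$, and in a scale of spaces where the remainder stays negligible after the up to $m-1$ derivatives applied to $u$ in the stationary-phase step. This is the genuinely new difficulty beyond the second-order (Schrödinger) setting, and the structural restriction on the coefficients of orders $m$ to $2m-2$ is precisely what makes the amplitude equation coefficient-free and hence makes the estimate usable. A secondary, bookkeeping difficulty is decoupling the roughly $m^{2}$ unknowns $B_{j,k}(z_0)$ from the single family of identities, which the polynomial amplitudes $\bar z^{p}g(z)$ together with the multi-scale stationary-phase expansion are arranged to do.
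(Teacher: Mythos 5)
Your overall strategy — integral identity from equal Cauchy data, Bukhgeim-type CGO solutions $e^{\pm\Phi/h}(a+r_h)$ with $\bar\partial^m a=\bar\partial^m b=0$, stationary phase at $z_0$, and induction over the indices using polynomial amplitudes in $\bar z$ — is the same as the paper's. However, there are two concrete gaps.

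First, the CGO construction, which you correctly identify as the crux, is not actually carried out: you appeal to an unspecified ``Carleman-type'' solvability estimate for the degenerate conjugated operator. The paper does something more explicit and elementary: it rewrites $\mathcal{L}u=\partial^m\bar\partial^m u+\sum\partial^j(A'_{j,k}\bar\partial^k u)$ with transformed coefficients $A'_{j,k}$ as in \eqref{eqn7}, reduces $\mathcal{L}u=0$ to the coupled system \eqref{est1}--\eqref{est2} for $(f,g)$, and solves the resulting integral equation $(I-\mathcal{S}_h)g=w$ by a Neumann series, the smallness $\|\mathcal{S}_h\|_{L^2\to L^2}=O(h^{1/2-\epsilon})$ coming from the decay of $\partial^{-1}(e^{(\Phi-\bar\Phi)/h}\,\cdot\,)$ (Lemma \ref{inverseestimate}). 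Without either this or an actual Carleman estimate, Theorem \ref{CGO} — and hence your whole argument — is unproved.

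Second, your stationary-phase bookkeeping is off. You keep the $\partial^j$ derivatives on $u$, writing $\partial^j\bar\partial^k u=e^{\Phi/h}(\partial+\Phi'/h)^j\bar\partial^k(a+r_h)$, and claim the $B_{j,k}$ term enters at an order ``controlled by $j$'' with leading coefficient proportional to $B_{j,k}(z_0)\,\bar\partial^k a(z_0)\,\overline{b(z_0)}$. That is not what happens: the dominant piece $h^{-j}(\Phi')^j\bar\partial^k a$ has an amplitude vanishing to order $j$ at $z_0$, and after stationary phase the surviving order-$h$ contribution involves $\bar\partial^j\bigl(B_{j,k}\,\bar\partial^k a\,\overline{b}\bigr)(z_0)$ — derivatives of the unknown coefficient, not its pointwise value — so the clean extraction and the proposed ``descending in $j$'' induction by comparing powers of $h$ do not go through as stated. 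The paper avoids this entirely: using \eqref{zerocond} it first integrates by parts to move every $\partial^j$ onto $\bar v=e^{-\bar\Phi/h}(\bar b+\bar s_h)$ (which produces no $1/h$ factors since $\partial\bar\Phi=0$), so that all $m^2$ terms appear at the \emph{same} order $h$ with factors $\bar\partial^k a(z_0)\,\partial^j\bar b(z_0)$, and the coefficients $A'_{j,k}$ are then separated purely algebraically by choosing $a=\bar z^k/k!$, $b=\bar z^j/j!$ and inducting on $j+k$; finally one inverts the triangular relation \eqref{eqn7} to recover $A_{j,k}$ itself. You gesture at this redistribution of derivatives in your first paragraph but then do not use it where it matters.
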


\noindent We require the condition \eqref{zerocond} on the coefficients to make the boundary terms \emph{zero} when we apply integration by parts on the integral identity and also to  apply the method of stationary phase as explained in Section \ref{III}.\\

\noindent  Alternatively, one can consider a Cauchy data set of the form
\begin{equation*}
		\mathcal{N}(\mathcal{L}) = \left\{  \left( u|_{\partial\Omega}, \ldots,   (-\Delta)^{m-1} u|_{\partial\Omega}, (\partial_\nu u)|_{\partial\Omega}, \cdots \partial_\nu (-\Delta)^{m-1} u|_{\partial\Omega}   \right):  u \in H^{2m}(\Omega),  \mathcal{L}u =0\right\},
\end{equation*}
as $\mathcal{C}(\mathcal{L})$ can be obtained from $\mathcal{N}(\mathcal{L})$ by an explicit description for the Laplacian in the boundary normal coordinates, see \cite{LU}. We have the following corollary.

\begin{cor}\label{MT2}
With the hypothesis as in Theorem \ref{MT}, $\mathcal{N}(\mathcal{L}) = \mathcal{N}(\tilde{\mathcal{L}})$ implies that $A_{j,k} = \tilde{A}_{j,k}$ on $\Omega$ for $0 \leq j,k \leq m-1.$
\end{cor}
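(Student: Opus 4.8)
The plan is to deduce Corollary \ref{MT2} from Theorem \ref{MT} by showing that, for a fixed domain $\Omega$, the two Cauchy data sets $\mathcal{C}(\mathcal{L})$ and $\mathcal{N}(\mathcal{L})$ encode exactly the same information via a transformation that depends only on the geometry of $\partial\Omega$ and not on the operator. To this end I would work in boundary normal coordinates $(x',x_n)$ in a collar neighbourhood of $\partial\Omega$, with $x_n$ the signed distance to the boundary, so that the induced metric is $dx_n^2 + g_{\alpha\beta}(x',x_n)\,dx'^\alpha dx'^\beta$ and $\partial_{x_n}$ restricts to $\partial_\nu$ on $\partial\Omega$. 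In these coordinates the Laplacian has the classical form
\[
\Delta = \partial_{x_n}^2 + b(x',x_n)\,\partial_{x_n} + \Delta_{x'} + (\text{lower order tangential terms}),
\]
with $b$ and the remaining coefficients determined entirely by $g_{\alpha\beta}$, i.e.\ by $\Omega$; this is the ``explicit description for the Laplacian in the boundary normal coordinates'' recorded in \cite{LU}.

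Next I would iterate this expansion. For $u\in H^{2m}(\Omega)$ all the traces occurring in $\mathcal{C}(\mathcal{L})$ and $\mathcal{N}(\mathcal{L})$ are well defined by the trace theorem, and substituting the formula for $\Delta$ into $(-\Delta)^j$ and restricting to $x_n=0$ gives, for $0\le j\le m-1$,
\[
(-\Delta)^{j}u\big|_{\partial\Omega} = (-1)^{j}\,\partial_\nu^{2j}u\big|_{\partial\Omega} + \big(\text{tangential differential operators in $\partial_\nu^{k}u|_{\partial\Omega}$, } k<2j\big),
\]
and likewise $\partial_\nu(-\Delta)^{j}u|_{\partial\Omega} = (-1)^{j}\,\partial_\nu^{2j+1}u|_{\partial\Omega} + (\text{terms of normal order} < 2j+1)$, where all coefficients depend only on $\Omega$. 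These $2m$ relations (for $j=0,\dots,m-1$, in both groups) are triangular in the normal order $k$ with leading coefficients $\pm1$, hence invertible; solving the triangular system expresses each $\partial_\nu^{k}u|_{\partial\Omega}$, $0\le k\le 2m-1$, as a universal ($\Omega$-dependent, $\mathcal{L}$-independent) combination of the entries of the $\mathcal{N}$-tuple and their tangential derivatives. In particular the $2m-1$ normal derivatives available in $\mathcal{C}(\mathcal{L})$ are exactly what is needed to generate every entry of $\mathcal{N}(\mathcal{L})$, and conversely. Thus there is a linear bijection $\Phi$, depending only on $\Omega$, between the tuple spaces underlying $\mathcal{C}$ and $\mathcal{N}$, such that $\mathcal{N}(\mathcal{L})=\Phi\big(\mathcal{C}(\mathcal{L})\big)$ for every admissible operator.

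Applying this to both $\mathcal{L}$ and $\tilde{\mathcal{L}}$, the hypothesis $\mathcal{N}(\mathcal{L})=\mathcal{N}(\tilde{\mathcal{L}})$ becomes $\Phi\big(\mathcal{C}(\mathcal{L})\big)=\Phi\big(\mathcal{C}(\tilde{\mathcal{L}})\big)$, and injectivity of $\Phi$ forces $\mathcal{C}(\mathcal{L})=\mathcal{C}(\tilde{\mathcal{L}})$; Theorem \ref{MT} then gives $A_{j,k}=\tilde{A}_{j,k}$ on $\Omega$ for $0\le j,k\le m-1$. I do not expect a genuine obstacle here: the only point requiring care is the bookkeeping of the boundary normal coordinate expansion of $(-\Delta)^{j}$ and $\partial_\nu(-\Delta)^{j}$ — checking the triangular structure and the invertibility of the leading coefficients, and that the relevant trace maps are continuous on $H^{2m}(\Omega)$ — which is the computation carried out in \cite{LU}. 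So the proof amounts to invoking that equivalence and then quoting Theorem \ref{MT}.
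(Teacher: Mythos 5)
Your proposal is correct and follows essentially the same route as the paper: the paper simply asserts, citing \cite{LU}, that $\mathcal{C}(\mathcal{L})$ can be recovered from $\mathcal{N}(\mathcal{L})$ via the expression of the Laplacian in boundary normal coordinates, and then invokes Theorem \ref{MT}; you have merely written out the triangularity/invertibility bookkeeping that the paper leaves to the reference. (Only cosmetic caveat: rename your bijection, since $\Phi$ already denotes the phase $i(z-z_0)^2$ in the paper.)
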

To prove Theorem \ref{MT}, we need to construct so-called complex geometric optics (CGO) solutions. The next theorem gives the existence of such solutions in our setting. Let us fix some notation before stating the theorem. Let $\Phi = i(z-z_0)^2$ where $z_0 \in \Omega$ and $dS$ be the surface measure on $\bd$. 
 
\begin{theorem}\label{CGO}
    Let $a$ be smooth function such that $\bar\partial^m a =0$ in $\Omega$. If $A_{j,k} \in W^{j+k+1,p}(\Omega)$, for some $p>2$, then for all small $h>0$ there exist solutions $u \in H^{2m}(\Omega)$ to 
\begin{equation}\label{scl}
\mathcal{L} u = \partial^{m}\bar{\partial}^{m}u + \sum_{j,k = 0}^{m-1}{A_{j,k}\partial^{j}\bar{\partial}^{k}u} = 0, \quad \text{ in } \Omega,
\end{equation}
    of the form 
    \begin{equation}\label{soln}
		u = e^{\Phi/h}(a + r_h), 
	\end{equation}   
    where the correction term $r_h$ satisfies $\|r_{h}\|_{H^{m}(\Omega)} =  O(h^{\frac{1}{2} + \epsilon})$ for some $\epsilon > 0$.
\end{theorem}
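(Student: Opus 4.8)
The plan is to substitute the ansatz $u = e^{\Phi/h}(a+r_h)$ into $\mathcal{L}u=0$ and derive an equation for the correction term $r_h$. Conjugating $\mathcal{L}$ by $e^{\Phi/h}$ produces $e^{-\Phi/h}\mathcal{L}(e^{\Phi/h}\,\cdot\,) = \mathcal{L}_\Phi$, a semiclassical operator. Since $\Phi = i(z-z_0)^2$ is holomorphic, $\bar\partial\Phi = 0$, so the conjugated $\bar\partial$ picks up nothing and the conjugated $\partial$ becomes $\partial + h^{-1}\partial\Phi$. Thus the leading part $\partial^m\bar\partial^m$ conjugates to $(\partial + h^{-1}\partial\Phi)^m\bar\partial^m$, and after multiplying through by $h^m$ the top-order (in $h^{-1}$) term is $(\partial\Phi)^m\bar\partial^m$. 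The point of the hypothesis $\bar\partial^m a = 0$ and of the structural restriction on the order-$m$ to $2m-2$ coefficients of $\mathcal{L}$ emphasized in the introduction is precisely that, after this conjugation and rescaling, the terms acting on the amplitude $a$ that are not $O(h)$ all involve $\bar\partial^m a$ and hence vanish; so the equation for $r_h$ takes the schematic form
\begin{equation*}
(\partial\Phi)^m\bar\partial^m r_h + h\,(\text{lower-order semiclassical terms})(a + r_h) = h^m f,
\end{equation*}
where $f$ collects the finitely many genuinely lower-order contributions of the $A_{j,k}$ acting on $a$, and is bounded in the relevant norm uniformly in $h$ by the regularity assumption $A_{j,k}\in W^{j+k+1,p}$.

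Next I would solve this by a fixed-point / Neumann-series argument built on a right inverse for $\bar\partial^m$. Because $\Omega$ is simply connected and $\partial\Phi = 2i(z-z_0)$ is nonvanishing except at the single point $z_0$, one divides by $(\partial\Phi)^m$ (absorbing the mild singularity at $z_0$, which is integrable in two dimensions) and then applies the solid Cauchy transform $m$ times, or equivalently a fixed right inverse $T_m$ of $\bar\partial^m$ mapping, say, $L^p$ into $W^{m,p}\hookrightarrow C^{m-1+\alpha}$. The key quantitative input is the gain of smallness in $h$: the standard stationary-phase / $\bar\partial$ estimate (as in Bukhgeim's method and its refinements, e.g. \cite{Buk, EB, IUY}) gives that convolution-type operators against $e^{\Phi/h}$-weighted kernels, or equivalently the semiclassical resolvent of $(\partial\Phi)^m\bar\partial^m$ acting on these oscillatory factors, is bounded by $O(h^{1/2+\epsilon})$ from $L^p$ to $L^p$ for $p>2$ — this is where the restriction $p>2$ enters. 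Combining the $h$-gain of the resolvent with the explicit factor $h^m$ (actually one only needs one net power of $h^{1/2+\epsilon}$) on the right-hand side and with the $O(h)$ prefactor on the lower-order terms, the map $r_h \mapsto (\text{resolvent})\big(h^m f - h\cdot(\text{l.o.t.})(a+r_h)\big)$ becomes a contraction on a small ball of $H^m(\Omega)$ for $h$ small, yielding a unique fixed point $r_h$ with $\|r_h\|_{H^m(\Omega)} = O(h^{1/2+\epsilon})$.

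A few routine points would need to be checked: that the conjugated operator has exactly the claimed form and that the restriction on the higher-order coefficients of $\mathcal{L}$ indeed kills all non-$O(h)$ terms other than those hitting $\bar\partial^m a$; that the integrable singularity of $(\partial\Phi)^{-m}$ at $z_0$ does not spoil the mapping properties (one may take $z_0$ interior and localize, or note $(z-z_0)^{-m}\in L^1_{\mathrm{loc}}$ fails for $m\ge 2$, so in fact one should not literally divide but rather keep $(\partial\Phi)^m$ and invert the composite operator $(\partial\Phi)^m\bar\partial^m$ directly, which is still elliptic away from $z_0$ and handled by the same Carleman/stationary-phase estimates); and elliptic regularity to upgrade the a priori $L^p$ bound on $r_h$ to the $H^m$ bound. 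The main obstacle, and the technical heart of the argument, is establishing the $O(h^{1/2+\epsilon})$ operator bound for the weighted inverse of $(\partial\Phi)^m\bar\partial^m$ in $L^p$ with $p>2$: this is exactly the delicate stationary-phase estimate that Bukhgeim's original argument got only in $W^{1,p}$ and that later works sharpened; adapting it to the $m$-fold operator and to amplitudes $a$ merely satisfying $\bar\partial^m a=0$ (rather than holomorphic) is the step requiring the most care.
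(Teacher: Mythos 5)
Your overall strategy (a perturbative/Neumann-series construction hinging on an $O(h^{1/2+\epsilon})$ stationary-phase gain, with $\bar\partial^m a=0$ killing the leading action on the amplitude) is in the right spirit, but there is a genuine gap at the centre of the argument: your scheme never produces the \emph{unimodular} oscillatory factor from which the $h^{1/2+\epsilon}$ gain actually comes. The estimate that is available (Lemma \ref{inverseestimate}) bounds $\partial^{-1}(e^{(\Phi-\bar\Phi)/h}\omega)$, where the phase $\Phi-\bar\Phi=2i\,\mathrm{Im}\,\Phi$ is purely imaginary with a nondegenerate critical point at $z_0$. By contrast, inverting your conjugated operator $(\partial\Phi)^m\bar\partial^m$ (or the full $(\partial+h^{-1}\partial\Phi)^m\bar\partial^m$) carries no $h$-smallness by itself, and if you undo the conjugation to invert the $\partial$-part you get $e^{-\Phi/h}\partial^{-1}e^{\Phi/h}$, whose weight $e^{\pm\mathrm{Re}\,\Phi/h}$ blows up as $h\to 0$ --- this is exactly the obstruction the paper flags before introducing its key device. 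That device, missing from your proposal, is Bukhgeim's two-weight factorization: write $\mathcal{L}$ in divergence form $\partial^m\bar\partial^m u+\sum\partial^j(A'_{j,k}\bar\partial^k u)$ as in \eqref{neq}, set $u=e^{\Phi/h}f$ and $G=e^{\Phi/h}\bar\partial^m f=e^{\bar\Phi/h}g$ with the \emph{antiholomorphic} weight on $g$, so that the coupled system \eqref{est1}--\eqref{est2} and the resulting integral equation \eqref{eqng} only ever involve the modulus-one factors $e^{\pm(\Phi-\bar\Phi)/h}$, to which Lemma \ref{inverseestimate} applies directly and yields $\|\mathcal{S}_h\|_{L^2\to L^2}=O(h^{1/2-\epsilon})$ and $\|w\|_{L^2}=O(h^{1/2+\epsilon})$.

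Two further points where your route fails as written. First, as you yourself note, $(\partial\Phi)^{-m}=(2i(z-z_0))^{-m}$ is not locally integrable for $m\ge 2$, so dividing by $(\partial\Phi)^m$ is not available, and your fallback (``invert the composite operator directly'') is precisely the unproved step; the paper never inverts this degenerate operator, it only inverts powers of $\partial$ and $\bar\partial$ via the Cauchy transforms of Lemma \ref{MPCO}. Second, the expansion of $h^m(\partial+h^{-1}\partial\Phi)^m\bar\partial^m$ contains terms $h^{l}P_l\bar\partial^m$ with $P_l$ of differential order $l$, $1\le l\le m$, i.e.\ up to $2m$ derivatives of $r_h$ accompanied by only $l$ powers of $h$; a contraction in $H^m(\Omega)$ cannot absorb these, so your fixed point does not close in the space where the conclusion is stated. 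The divergence-form rewriting \eqref{neq}, with $A'_{j,k}$ defined by \eqref{eqn7}, is what lets the paper integrate the equation down to the zeroth-order identity $(I-\mathcal{S}_h)g=w$ posed in $L^2$, recover $g\in H^m$ by bootstrapping through \eqref{eqng}, and only then set $r_h=\bar\partial^{-m}(e^{(\bar\Phi-\Phi)/h}g)$. You would need to supply an analogue of this reduction for your argument to go through.
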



\section{CGO Solutions} 
\noindent In this section, we prove Theorem \ref{CGO} as stated in Introduction. {\color{black} For this we need Cauchy operators - $\partial^{-1}, \bar{\partial}^{-1}$ which are defined by 
$$
\partial^{-1} f(z)=\frac{1}{\pi} \int_{\Omega} \frac{f(\xi)}{\bar{z}-\bar{\xi}} d\xi \quad  \text{ and }
\quad \bar{\partial}^{-1} f(z)=\frac{1}{\pi} \int_{\Omega} \frac{f(\xi)}{z-\xi} d\xi,
$$
for all $z$ where the integrals converge. The mapping properties of these operators are listed in the following lemma. For the proofs of these properties, we refer to \cite[Chapter I]{Vekua}.

\begin{lemma}\label{MPCO}
    The operators $\partial^{-1}$ and $\bar{\partial}^{-1}$ have the following properties:
\begin{enumerate}
    \item If $f \in L^1(\Omega)$ then $\partial \partial^{-1} f=\bar{\partial} \bar{\partial}^{-1} f=f$.

    \item The operators $\partial^{-1}, \bar{\partial}^{-1}$ are bounded in $L^p(\Omega), 1<p<\infty$. 
      
    \item If $1<p<\infty$, then $\partial^{-1}, \bar{\partial}^{-1}: L^p(\Omega) \rightarrow W^{1,p}(\Omega)$ are bounded.
      
\end{enumerate}
\end{lemma}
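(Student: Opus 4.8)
The plan is to prove the three properties in order of increasing depth, reducing everything to two classical facts: that $\tfrac{1}{\pi z}$ and $\tfrac{1}{\pi\bar z}$ are fundamental solutions of $\bar\partial$ and $\partial$ respectively, and that the Beurling--Ahlfors transform is a Calder\'on--Zygmund singular integral bounded on $L^p$. The defining integrals realize $\bar\partial^{-1}f$ and $\partial^{-1}f$ as the convolutions $\tfrac{1}{\pi z}*(f\mathbf{1}_\Omega)$ and $\tfrac{1}{\pi\bar z}*(f\mathbf{1}_\Omega)$, and this convolution viewpoint drives all three parts.

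For property (1), since $\tfrac{1}{\pi z}$ is locally integrable on $\mathbb{C}$ and $f\mathbf{1}_\Omega\in L^1(\mathbb{C})$ has compact support, the convolutions are well defined as locally integrable functions and may be differentiated distributionally. Using $\bar\partial\left(\tfrac{1}{\pi z}\right)=\delta_0$ and $\partial\left(\tfrac{1}{\pi\bar z}\right)=\delta_0$ gives $\bar\partial\,\bar\partial^{-1}f=\delta_0*(f\mathbf{1}_\Omega)=f$ and $\partial\,\partial^{-1}f=f$ on $\Omega$.

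For property (2) I would exploit that $\Omega$ is bounded. Writing $R=\operatorname{diam}\Omega$, for $z,\xi\in\Omega$ one has $z-\xi\in\overline{B(0,R)}$, so on $\Omega$ the operator $\bar\partial^{-1}$ coincides with convolution against the truncated kernel $k=\tfrac{1}{\pi z}\mathbf{1}_{B(0,R)}$. Since $\int_{|z|<R}\tfrac{1}{\pi|z|}\,dA(z)=2R<\infty$, we have $k\in L^1(\mathbb{C})$, and Young's convolution inequality yields $\|\bar\partial^{-1}f\|_{L^p(\Omega)}\le\|k\|_{L^1}\|f\|_{L^p(\Omega)}$ for every $1<p<\infty$; the identical estimate with $|\bar z|=|z|$ handles $\partial^{-1}$.

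Property (3) is where the work lies, and combined with (2) it suffices to bound both first-order derivatives in $L^p$. One derivative is free: part (1) gives $\bar\partial\,\bar\partial^{-1}f=f$ and $\partial\,\partial^{-1}f=f$, bounded outright by $\|f\|_{L^p}$. For the transverse derivatives, differentiating the kernel via $\partial_z\tfrac{1}{z-\xi}=-\tfrac{1}{(z-\xi)^2}$ gives $\partial\,\bar\partial^{-1}f=Sf$, where $S$ is the Beurling--Ahlfors transform with principal-value kernel $-\tfrac{1}{\pi(z-\xi)^2}$, and $\bar\partial\,\partial^{-1}f=\bar Sf$ is its conjugate. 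The kernel $z^{-2}$ is homogeneous of degree $-2$ with mean zero over circles, so $S$ is a convolution Calder\'on--Zygmund operator and extends boundedly to $L^p(\mathbb{C})$, $1<p<\infty$; restricting to $\Omega$ and combining with the free derivative and part (2) gives the stated $W^{1,p}$ bounds. The main obstacle is exactly this last step: unlike (2), the transverse derivative is a genuine singular integral whose $L^p$-boundedness cannot come from Young's inequality and instead requires the full Calder\'on--Zygmund (or Mikhlin multiplier) machinery. A secondary technical point is to verify that $\partial\left(\tfrac{1}{\pi z}\right)$ equals the principal-value kernel with \emph{no} spurious Dirac mass, which is what makes the intertwining identity $\partial\,\bar\partial^{-1}=S$ exact; any hypothetical delta term would in any case be harmless for the $L^p$ estimate.
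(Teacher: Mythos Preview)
Your proposal is correct and follows the classical route: fundamental-solution identity for (1), Young's inequality with the truncated $L^1$ kernel for (2), and the Beurling--Ahlfors transform plus Calder\'on--Zygmund theory for the transverse derivative in (3). The paper itself does not give a proof of this lemma at all; it simply cites \cite[Chapter~I]{Vekua}, where precisely the argument you outline is carried out in detail. So there is nothing to compare---your sketch is essentially the standard proof that the citation points to.
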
 
}

\noindent Now we begin the construction of CGO solutions by writing 
 \[
    \mathcal{L}u  = \partial^{m}\bar{\partial}^{m}u + \sum_{j,k = 0}^{m - 1}{A_{j,k}\partial^{j}\bar{\partial}^{k}u} = 0
 \]   
 in the following form
\begin{equation}\label{neq}
\mathcal{L}u = \partial^{m}\bar{\partial}^{m}u + \sum_{j,k = 0}^{m - 1}{\partial^{j}(A_{j,k}'\bar{\partial}^{k}u)} = 0,
\end{equation}
\noindent where we can define $A'_{j,k} \in W^{j+k+1,p}(\Omega) $ uniquely satisfying
\begin{equation}\label{eqn7}
A_{j,k} = \sum_{l = j}^{m-1}{\binom{l}{j}\partial^{l-j}A'_{l,k}}.
\end{equation}

\noindent Substituting $u=e^{\Phi/h}f$ in \eqref{neq}, we have
\[\partial^{m}\left(e^{\Phi/h}\bar{\partial}^{m}f\right)  + \sum_{j,k = 0}^{m-1}{\partial^{j}\left(e^{\Phi/h}A'_{j,k}\bar{\partial}^{k}f\right)} = 0.\]
Now, we write $G = e^{\Phi/h}\bar\partial^mf$ and the above expression takes the form
\begin{align}
\begin{split}
 \bar{\partial}^{m}f & = e^{-\Phi /h}G\\
 \partial^{m}G & = -\sum_{j,k = 0}^{m-1}{\partial^{j}\left(e^{\Phi/h}A'_{j,k}\bar{\partial}^{k}f\right)}
 \end{split}
\end{align}
The problem here is that $|e^{\pm\Phi/h}|$ grows too fast when $h \to 0$. This can be solved by choosing $G= e^{\bar\Phi/h}g$ to get
\begin{align}
    \bar{\partial}^{m}f & = e^{(\bar{\Phi}-\Phi)/h}g \label{est1} \\
    \partial^{m}g & = -\sum_{j,k = 0}^{m-1}{\partial^{j}\left(e^{(\Phi - \bar{\Phi})/h}A'_{j,k}\bar{\partial}^{k}f\right)} \label{est2}
\end{align}

\noindent For \eqref{est1}, we take the solution 
\begin{equation}\label{fsoln}
f  = a + \bar{\partial}^{-m}\left(e^{(\bar{\Phi}-\Phi)/h}g\right), \quad \text{ where } \bar\partial^m a = 0 
\end{equation}
 and for \eqref{est2} we choose the solution
\begin{equation}\label{gsoln}
g  = -\sum_{j,k = 0}^{m-1}{\partial^{j-m}\left(e^{(\Phi - \bar{\Phi}) /h} A'_{j,k}\bar{\partial}^{k}f \right)}.
\end{equation}
By combining these two, we get an integral equation for $g$ of the form
\begin{equation}\label{eqng}
    g + \sum_{j,k = 0}^{m-1}{\partial^{j-m}\left(e^{(\Phi - \bar{\Phi}) /h}A'_{j,k}\bar{\partial}^{k-m}\left(e^{(\bar{\Phi}-\Phi)/h}g\right)\right)}  = -\sum_{j,k = 0}^{m-1}{\partial^{j-m}\left(e^{(\Phi - \bar{\Phi}) /h}A'_{j,k}\bar{\partial}^{k}a\right)}.    
\end{equation}
The above expression for $g$ can be written in the form
\begin{equation}\label{CT3}
    (I - \mathcal{S}_h)g = w,
\end{equation}
where 
\begin{align}\label{estimate5}
\begin{split}
    \mathcal{S}_h(v) &=  - \sum_{j,k = 0}^{m-1}{\partial^{j-m}\left(e^{(\Phi - \bar{\Phi}) /h}A'_{j,k}\bar{\partial}^{k-m}\left(e^{(\bar{\Phi}-\Phi)/h}v\right)\right)} \\
    w &= -\sum_{j,k = 0}^{m-1}{\partial^{j-m}\left(e^{(\Phi - \bar{\Phi}) /h}A'_{j,k}\bar{\partial}^{k}a\right)}.
\end{split}
\end{align}
 The existence of a CGO solution of the form \eqref{soln} to the equation \eqref{scl} depends on the solvability of \eqref{CT3}. To this end, we estimate the norm of $\mathcal{S}_{h}$ for which we need the following crucial operator bound from \cite[Lemma 2.3]{GT} and \cite[Lemma 5.4]{GT13}.
\begin{lemma}\label{inverseestimate}
Let $q\in (1,\infty)$ and $p > 2$, then there exists $C > 0$ independent of $h$ such that for all $\omega \in W^{1,p}(\Omega)$
\begin{align*}
\|{\partial}^{-1}(e^{(\Phi - \bar{\Phi})/h}\omega)\|_{L^{q}(\Omega)} & \leq C h^{2/3}\|\omega\|_{W^{1,p}(\Omega)} \quad \text{if } 1 < q < 2,\\
\|{\partial}^{-1}(e^{(\Phi - \bar{\Phi})/h}\omega)\|_{L^{q}(\Omega)} & \leq C h^{1/q}\|\omega\|_{W^{1,p}(\Omega)} \quad \text{if } 2 \leq q \leq p.
\end{align*}
There exist $\epsilon > 0$ and $C > 0$ such that for all $\omega \in W^{1,p}(\Omega)$
\[\|{\partial}^{-1}(e^{(\Phi - \bar{\Phi})/h}\omega)\|_{L^{2}(\Omega)}  \leq C h^{\frac{1}{2} + \epsilon}\|\omega\|_{W^{1,p}(\Omega)}.\]
\end{lemma}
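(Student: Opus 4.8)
The plan is to estimate the oscillatory operator
\[
T_h\omega(z) := \partial^{-1}\!\left(e^{(\Phi-\bar\Phi)/h}\omega\right)(z) = \frac1\pi\int_\Omega\frac{e^{(\Phi(\xi)-\bar\Phi(\xi))/h}\,\omega(\xi)}{\bar z-\bar\xi}\,d\xi
\]
by playing the nondegeneracy of the phase against the Sobolev regularity of $\omega$. Two features drive everything. First, $\Phi-\bar\Phi = 2i\,\mathrm{Im}\,\Phi$ is purely imaginary, so $|e^{(\Phi-\bar\Phi)/h}|=1$: the exponential is genuinely oscillatory, not growing. Second, since $\Phi=i(z-z_0)^2$ we have $\partial_\xi(\Phi-\bar\Phi)=2i(\xi-z_0)$, so $\mathrm{Im}\,\Phi$ has a single nondegenerate critical point at $z_0\in\Omega$ with $|\partial_\xi(\Phi-\bar\Phi)|\sim|\xi-z_0|$ away from it. Accordingly I would split $\omega=\chi_\delta\omega+(1-\chi_\delta)\omega$, where $\chi_\delta$ is a cutoff supported in $B(z_0,\delta)$, and optimize over $\delta$: inside $B(z_0,\delta)$ there is no usable oscillation, while outside one integration by parts trades the oscillation for a power of $h$.

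For the near piece I would simply discard the oscillation, using $|e^{(\Phi-\bar\Phi)/h}|=1$ to get $|T_h(\chi_\delta\omega)(z)|\le\frac1\pi\int_{B(z_0,\delta)}|z-\xi|^{-1}|\omega(\xi)|\,d\xi$. Since $p>2$, the Sobolev embedding $W^{1,p}(\Omega)\hookrightarrow C^0(\bar\Omega)$ gives $|\omega|\le C\|\omega\|_{W^{1,p}}$, and the elementary bound $\int_{B(z_0,\delta)}|z-\xi|^{-1}d\xi\lesssim\min\!\big(\delta,\delta^2/|z-z_0|\big)$ yields, after taking $L^q$ norms, a contribution of size $O(\delta^{2})\|\omega\|_{W^{1,p}}$ for $q\le 2$ (with a harmless logarithm at $q=2$) and $O(\delta^{1+2/q})\|\omega\|_{W^{1,p}}$ for $q\ge 2$.

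For the far piece I would write $e^{(\Phi-\bar\Phi)/h}=\frac{h}{2i(\xi-z_0)}\,\partial_\xi e^{(\Phi-\bar\Phi)/h}$ and integrate by parts in $\xi$, deliberately using $\partial_\xi$ rather than $\bar\partial_\xi$: this way, when the derivative falls on the Cauchy kernel it produces the delta $\partial_\xi\frac{1}{\bar z-\bar\xi}=-\pi\delta(\xi-z)$ instead of the worse kernel $|\bar z-\bar\xi|^{-2}$. One gains a factor $h$ and three kinds of terms — a pointwise term $\sim h\,\omega(z)/(z-z_0)$, a term with the borderline singularity $|\xi-z_0|^{-2}$, and a term carrying $\partial_\xi\omega$ (this is exactly where $\omega\in W^{1,p}$, and not merely $L^p$, is needed). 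Each is controlled by Young's and Hardy–Littlewood–Sobolev inequalities; the hypothesis $p>2$ ensures that $|\xi-z_0|^{-2}$, restricted to $|\xi-z_0|\ge\delta$, pairs with $\|\omega\|_{L^p}$ through a dual exponent, at the cost of a blow-up $O(h\,\delta^{-2/p})$ for $q\le 2$ and $O(h\,\delta^{-(1-2/q+2/p)})$ for $q\ge 2$.

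Optimizing the near and far contributions forces $\delta^{2+2/p}\sim h$, i.e. $\delta\sim h^{p/(2p+2)}$, giving all three rates at once. For $q\le 2$ the balanced value is $h^{p/(p+1)}$: at $q=2$ this is precisely $\|T_h\omega\|_{L^2}\lesssim h^{1/2+\epsilon}\|\omega\|_{W^{1,p}}$ with $\epsilon=(p-1)/(2(p+1))>0$, which is the bound needed for the contraction in \eqref{CT3}, while for $1<q<2$ one has $h^{p/(p+1)}\le h^{2/3}$ because $p>2$; a parallel optimization for $2\le q\le p$ gives exponent $(q+2)p/(2q(p+1))\ge 1/q$, hence the stated $h^{1/q}$. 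The main obstacle is the interaction of the singular Cauchy kernel with the critical point: integration by parts is the only mechanism converting oscillation into decay, yet it generates the singularity $|\xi-z_0|^{-2}$, which is exactly logarithmically divergent in two dimensions and is tamed only by $p>2$; moreover the limited regularity $\omega\in W^{1,p}$ permits a single integration by parts, which is what pins down the final powers of $h$. A secondary technical point is that integrating by parts over $\Omega$ produces boundary integrals on $\partial\Omega$, which I would estimate as $O(h)$ (the kernel is bounded there since $z_0$ is interior, and the phase is non-stationary away from isolated boundary points) and absorb into the stated bounds.
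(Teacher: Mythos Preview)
The paper does not supply its own proof of this lemma: it is quoted verbatim from \cite[Lemma~2.3]{GT} and \cite[Lemma~5.4]{GT13} and used as a black box. Your sketch is in fact the argument those references carry out: a near/far decomposition around the unique critical point $z_0$ of $\Phi-\bar\Phi$, a trivial bound on the near ball using $|e^{(\Phi-\bar\Phi)/h}|=1$ and $W^{1,p}\hookrightarrow C^0$, and one integration by parts on the far piece via the identity $e^{(\Phi-\bar\Phi)/h}=\frac{h}{2i(\xi-z_0)}\partial_\xi e^{(\Phi-\bar\Phi)/h}$, followed by optimization in $\delta$. Your observation that one should integrate by parts with $\partial_\xi$ rather than $\bar\partial_\xi$, so that the derivative hitting the antiholomorphic Cauchy kernel produces a delta instead of $|z-\xi|^{-2}$, is exactly the mechanism that makes the computation close. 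The only items I would flag as needing a line of care in a full write-up are the term coming from $\partial_\xi$ landing on the cutoff $1-\chi_\delta$ (which you do not list separately, though it behaves like a rescaled near piece) and the precise bookkeeping for the $|\xi-z_0|^{-2}$ term; both are routine once set up. Note also that your balanced exponents $h^{p/(p+1)}$ and $h^{(q+2)p/(2q(p+1))}$ are strictly better than the stated $h^{2/3}$ and $h^{1/q}$ for $p>2$, so the lemma as phrased is deliberately non-sharp and your optimization recovers it with room to spare.
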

\begin{lemma}\label{estimate11}
For any $1 < r  \leq p$, the operator $\mathcal{S}_{h}$ is bounded on $L^{r}(\Omega)$ and satisfies $\norm{\mathcal{S}_{h}}_{L^{r}\to L^{r}} = O(h^{1/r})$ for $r > 2$ and $\norm{\mathcal{S}_{h}}_{L^{2}\to L^{2}} = O(h^{\frac{1}{2} - \epsilon})$ for any $0 < \epsilon < 1/2$ small.     
\end{lemma}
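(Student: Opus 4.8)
The plan is to bound each of the finitely many summands
\[
T_{j,k}(v):=\partial^{\,j-m}\!\Big(e^{(\Phi-\bar{\Phi})/h}\,A'_{j,k}\,\bar\partial^{\,k-m}\!\big(e^{(\bar{\Phi}-\Phi)/h}v\big)\Big),\qquad 0\le j,k\le m-1,
\]
so that, since $\mathcal{S}_h=-\sum_{j,k}T_{j,k}$, we have $\|\mathcal{S}_h\|_{L^r\to L^r}\le\sum_{j,k}\|T_{j,k}\|_{L^r\to L^r}$. Three elementary facts drive the estimate. First, since $\Phi=i(z-z_0)^2$ is holomorphic, $\Phi-\bar{\Phi}$ is purely imaginary, so $e^{\pm(\Phi-\bar{\Phi})/h}$ are multiplications by unimodular functions and hence isometries of every $L^q(\Omega)$, uniformly in $h$. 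Second, by Lemma \ref{MPCO} each of the $m-j\ge1$ factors $\partial^{-1}$ and the $m-k\ge1$ factors $\bar\partial^{-1}$ maps $L^q(\Omega)\to W^{1,q}(\Omega)$ for $1<q<\infty$, hence by the two–dimensional Sobolev embedding never lowers integrability and sends $L^q$ into $L^\infty$ once $q>2$. Third, since $p>2$ and $j+k+1\ge1$ we have $A'_{j,k}\in W^{j+k+1,p}(\Omega)\hookrightarrow W^{1,p}(\Omega)$, which is a Banach algebra in dimension two; thus multiplication by $A'_{j,k}$ is bounded on every $L^q(\Omega)$ and preserves $W^{1,s}(\Omega)$ for each $2<s\le p$. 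The sole source of $h$–smallness is Lemma \ref{inverseestimate}; its proof does not use the value of the exponent beyond its being $>2$, so it may be invoked with $p$ replaced by any $s\in(2,p]$, giving $\|\partial^{-1}(e^{(\Phi-\bar{\Phi})/h}\omega)\|_{L^q(\Omega)}\le Ch^{1/q}\|\omega\|_{W^{1,s}(\Omega)}$ whenever $2\le q\le s$, with $C$ independent of $h$.

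For $2<r\le p$ I would read $T_{j,k}$ from the inside out. The input $v\in L^r(\Omega)$ is unchanged by the unimodular multiplier $e^{(\bar{\Phi}-\Phi)/h}$; applying $\bar\partial^{\,k-m}$ ($m-k\ge1$ copies of $\bar\partial^{-1}$) lands us in $W^{1,r}(\Omega)\hookrightarrow L^\infty(\Omega)$ with norm controlled by $\|v\|_{L^r(\Omega)}$; multiplying by $A'_{j,k}$ keeps us in $W^{1,r}(\Omega)$; the innermost $\partial^{-1}$, preceded by $e^{(\Phi-\bar{\Phi})/h}$, is estimated by Lemma \ref{inverseestimate} with $s=q=r$, producing the gain $h^{1/r}$ and landing us back in $L^r(\Omega)$; and the remaining $m-j-1$ copies of $\partial^{-1}$ act as bounded operators on $L^r(\Omega)$. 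Hence $\|T_{j,k}\|_{L^r\to L^r}=O(h^{1/r})$ and $\|\mathcal{S}_h\|_{L^r\to L^r}=O(h^{1/r})$ for every $2<r\le p$. Ignoring all oscillation, the same composition with each factor treated as a plain bounded operator gives the uniform bound $\|\mathcal{S}_h\|_{L^r\to L^r}=O(1)$ for all $1<r\le p$. Complex interpolation of the $O(h^{1/r})$ bound at exponents slightly above $2$ against the $O(1)$ bound at exponents slightly above $1$ then yields $\|\mathcal{S}_h\|_{L^2\to L^2}=O(h^{1/2-\epsilon})$ for every small $\epsilon>0$; this interpolation loss is precisely why the $L^2$ bound reads $h^{1/2-\epsilon}$ rather than the $h^{1/2+\epsilon}$ that the endpoint ($q=2$) case of Lemma \ref{inverseestimate} would give if it applied.

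The one point needing care, and the main obstacle, is the family of terms with $k=m-1$: there is a single $\bar\partial^{-1}$, so $\bar\partial^{-1}(e^{(\bar{\Phi}-\Phi)/h}v)$ lies only in $W^{1,r}(\Omega)$, which is strictly smaller than $W^{1,p}(\Omega)$ when $r<p$ and fails to lie in $W^{1,p}$ at all once $r\le2$; multiplying by the smooth coefficient $A'_{j,m-1}$ does not repair this, so Lemma \ref{inverseestimate} cannot be applied with the full $W^{1,p}$ norm. The resolution in the plan above is to not insist on reaching $W^{1,p}$: for $r>2$ the space $W^{1,r}$ is already an admissible domain for (the exponent-$r$ form of) Lemma \ref{inverseestimate}, and the case $r=2$ is recovered only by interpolation, at the cost of the $\epsilon$. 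Everything else is the mechanical composition of bounded maps described above, together with the bookkeeping — via $\partial\Phi=2i(z-z_0)$ and the already–established Lemma \ref{inverseestimate} — that guarantees no negative power of $h$ survives anywhere in the composition.
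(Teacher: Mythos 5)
Your proposal is correct and follows essentially the same route as the paper: peel off all but one Cauchy transform using Lemma \ref{MPCO}, apply Lemma \ref{inverseestimate} (with the Sobolev exponent $r$ in place of $p$) to the innermost $\partial^{-1}$ to gain $h^{1/r}$ for $2<r\le p$, observe plain $O(1)$ boundedness for $1<r<2$, and interpolate to obtain the $O(h^{1/2-\epsilon})$ bound at $r=2$. Your explicit remark that Lemma \ref{inverseestimate} must be invoked with $W^{1,r}$ rather than $W^{1,p}$ (since $\bar\partial^{k-m}$ only lands the input in $W^{1,r}$) is a point the paper uses implicitly without comment, so this is a slightly more careful write-up of the same argument.
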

\begin{proof}
\noindent Firstly, for $2< r \leq p,$ we obtain
\begin{align*}
 \|S_{h}(v)\|_{L^{r}(\Omega)} & \leq \sum_{j,k = 0}^{m-1} \norm{\partial^{j-m}\left(e^{(\Phi - \bar{\Phi})/h}A'_{j,k}\bar{\partial}^{k-m}\left(e^{(\bar{\Phi} - \Phi)/h}v\right)\right)}_{L^{r}(\Omega)}. \\
 & \leq C\sum_{j,k = 0}^{m-1}\norm{\partial^{-1}\left(e^{(\Phi - \bar{\Phi})/h}A'_{j,k}\bar{\partial}^{k-m}\left(e^{(\bar{\Phi}-\Phi)/h}v\right)\right)}_{L^{r}(\Omega)} 
 \text{\tre{(using Lemma \ref{MPCO}-(2))}}\\
 & \leq C h^{\frac{1}{r}}\sum_{j,k = 0}^{m-1}\norm{A'_{j,k}\bar{\partial}^{k-m}\left(e^{(\bar{\Phi} - \Phi)/h}v\right)}_{W^{1,r}(\Omega)} 
 \hspace{1.8cm}\text{\tre{ (using Lemma \ref{inverseestimate})}} \\
& \leq C h^{\frac{1}{r}}\sum_{k = 0}^{m-1}\norm{\bar{\partial}^{k-m}\left(e^{(\bar{\Phi} - \Phi)/h}v\right)}_{W^{1,r}(\Omega)} \hspace{2.6cm}\text{\tre{(since $A'_{j,k} \in L^{\infty}(\Omega)$)}}\\
& \leq C h^{\frac{1}{r}}\|v\|_{L^{r}(\Omega)}.
\end{align*}

\noindent Further, for $1<r<2,$ 
\begin{align*}
 \|S_{h}(v)\|_{L^{r}(\Omega)} & \leq \sum_{j,k = 0}^{m-1} \norm{\partial^{j-m}\left(e^{(\Phi - \bar{\Phi})/h}A'_{j,k}\bar{\partial}^{k-m}\left(e^{(\bar{\Phi} - \Phi)/h}v\right)\right)}_{L^{r}(\Omega)} \\
 & \leq C\sum_{j,k = 0}^{m-1}\norm{\partial^{-1}\left(e^{(\Phi - \bar{\Phi})/h}A'_{j,k}\bar{\partial}^{k-m}\left(e^{(\bar{\Phi}-\Phi)/h}v\right)\right)}_{L^{r}(\Omega)} \text{\tre{ (using Lemma \ref{MPCO}-(2))}}\\
 & \leq C \sum_{j,k = 0}^{m-1}\norm{A'_{j,k}\bar{\partial}^{k-m}\left(e^{(\bar{\Phi} - \Phi)/h}v\right)}_{L^{r}(\Omega)}
 \hspace{2.6cm} \text{\tre{ (using Lemma \ref{MPCO}-(2))}}\\
& \leq C \sum_{k = 0}^{m-1}\norm{\bar{\partial}^{k-m}\left(e^{(\bar{\Phi} - \Phi)/h}v\right)}_{L^{r}(\Omega)}  \hspace{3.4cm}\text{\tre{(since $A'_{j,k} \in L^{\infty}(\Omega)$)}}\\
& \leq C \|v\|_{L^{r}(\Omega)}. \hspace{6.9cm} \text{\tre{ (using Lemma \ref{MPCO}-(2))}}
\end{align*}
For all $\varepsilon>0$ small, interpolating between $r = 1+ \varepsilon$ and $r = 2+ \varepsilon$, gives the desired result for $r = 2$.
\end{proof}
\begin{prop}\label{g}
For all sufficiently small $h > 0$, there exist a solution $g\in H^{m}(\Omega)$ to the equation 
\[
    (I - \mathcal{S}_{h})g = w,
\]
where $\mathcal{S}_{h}$ and $w$ defined in \eqref{estimate5} which satisfies $\|g\|_{L^{2}} = O(h^{\frac{1}{2} + \epsilon})$.  
\end{prop}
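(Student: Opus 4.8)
The plan is to first solve \eqref{CT3} in $L^2(\Omega)$ by a Neumann series, then estimate $\|w\|_{L^2}$, and finally bootstrap the resulting $L^2$-solution up to $H^m(\Omega)$ using the smoothing of the Cauchy operators. For the first step: by Lemma~\ref{estimate11} we have $\|\mathcal{S}_h\|_{L^2\to L^2}=O(h^{1/2-\epsilon})$, so for all sufficiently small $h>0$, $\|\mathcal{S}_h\|_{L^2\to L^2}<\tfrac12$, and hence $I-\mathcal{S}_h$ is invertible on $L^2(\Omega)$ with $g:=(I-\mathcal{S}_h)^{-1}w=\sum_{n\ge 0}\mathcal{S}_h^{\,n}w$ the unique $L^2$-solution of \eqref{CT3}, satisfying $\|g\|_{L^2(\Omega)}\le\bigl(1-\|\mathcal{S}_h\|_{L^2\to L^2}\bigr)^{-1}\|w\|_{L^2(\Omega)}\le 2\|w\|_{L^2(\Omega)}$.

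Next I would estimate $\|w\|_{L^2}$. For each $0\le j,k\le m-1$ we have $m-j\ge 1$, so $\partial^{j-m}=\partial^{-(m-j-1)}\partial^{-1}$, and since $a$ is smooth and $A'_{j,k}\in W^{j+k+1,p}(\Omega)\subset W^{1,p}(\Omega)$, the function $A'_{j,k}\bar\partial^k a$ lies in $W^{1,p}(\Omega)$. The last estimate in Lemma~\ref{inverseestimate} then gives $\bigl\|\partial^{-1}\bigl(e^{(\Phi-\bar\Phi)/h}A'_{j,k}\bar\partial^k a\bigr)\bigr\|_{L^2(\Omega)}\le Ch^{\frac12+\epsilon}\|A'_{j,k}\bar\partial^k a\|_{W^{1,p}(\Omega)}$, and absorbing the remaining $\partial^{-(m-j-1)}$ with the $L^2$-boundedness in Lemma~\ref{MPCO}-(2) yields $\|w\|_{L^2(\Omega)}=O(h^{1/2+\epsilon})$, hence $\|g\|_{L^2(\Omega)}=O(h^{1/2+\epsilon})$.

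It remains to show this $L^2$-solution lies in $H^m(\Omega)$, which I would obtain by bootstrapping on $g=w+\mathcal{S}_h g$. First, $w\in H^{m+1}(\Omega)$: in its $(j,k)$-term the factor $A'_{j,k}\bar\partial^k a$ belongs to $W^{j+k+1,p}(\Omega)$, and applying $\partial^{j-m}=\partial^{-(m-j)}$ — which gains $m-j$ derivatives on the $W^{\cdot,p}$-scale by the iterates of Lemma~\ref{MPCO}-(3) — places that term in $W^{k+m+1,p}(\Omega)\hookrightarrow H^{m+1}(\Omega)$. Second, at fixed $h$ the operator $\mathcal{S}_h$ maps $H^{t}(\Omega)$ into $H^{t+2}(\Omega)$: in the $(j,k)$-term of $\mathcal{S}_h v$, the inner $\bar\partial^{k-m}=\bar\partial^{-(m-k)}$ applied to $e^{(\bar\Phi-\Phi)/h}v$ lands in $H^{t+m-k}$; multiplication by $A'_{j,k}\in W^{j+k+1,p}(\Omega)\hookrightarrow C^{j+k,1-2/p}(\Omega)$ (Morrey) keeps it in $H^{\min(j+k+1,\,t+m-k)}$; and the outer $\partial^{j-m}=\partial^{-(m-j)}$ adds $m-j$ derivatives, giving regularity $\min(k+m+1,\,t+2m-j-k)\ge t+2$, with the value $t+2$ attained only at $j=k=m-1$. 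Consequently $g=w+\mathcal{S}_h g\in H^{\min(m+1,\,t+2)}(\Omega)$ whenever $g\in H^{t}(\Omega)$, so starting from $t=0$ and iterating $\lceil m/2\rceil$ times gives $g\in H^m(\Omega)$.

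The main obstacle is this last step. Because the most singular coefficient term, indexed by $j=k=m-1$, carries only the smoothing $\bar\partial^{-1}\circ\partial^{-1}$ of total order two, one cannot reach $H^m$ in a single pass and must iterate the bootstrap; moreover it must be carried out at fixed $h$, using only boundedness (not $h$-smallness) of the relevant operators on the higher Sobolev spaces, since differentiating the amplitudes $e^{\pm(\Phi-\bar\Phi)/h}$ produces powers of $h^{-1}$ — the $h$-gain is available only in $L^q$-norms via Lemma~\ref{inverseestimate}, which is precisely where it is used, namely in the $L^2$-bounds on $g$ and $w$. Some care is also required in the Sobolev product estimates, exploiting that each $A'_{j,k}$ enjoys the full $W^{j+k+1,p}$-regularity, to justify the claimed gains.
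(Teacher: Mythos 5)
Your proposal is correct and follows essentially the same route as the paper: a Neumann series for $(I-\mathcal{S}_h)^{-1}$ in $L^2(\Omega)$ using Lemma \ref{estimate11}, the bound $\|w\|_{L^2}=O(h^{1/2+\epsilon})$ from Lemma \ref{inverseestimate}, and then a bootstrap on $g = w + \mathcal{S}_h g$ at fixed $h$ via the smoothing of the Cauchy operators, terminating at $H^m$ because of the limited regularity of $w$. Your version merely makes the derivative count in the bootstrap (two orders gained per pass, $H^{m+1}$ regularity of $w$) more explicit than the paper does.
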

\begin{proof}
    In view of Lemma \ref{estimate11}, equation \eqref{CT3} can be solved by using Neumann series by setting (for small $h > 0$)
\[
    g = \sum_{j=0}^{\infty} \mathcal{S}_h^j w.
\]
as an element of $L^{2}(\Omega)$. Indeed $\|w\|_{L^{2}(\Omega)} = O(h^{\frac{1}{2} + \epsilon})$ by Lemma \ref{inverseestimate} and $\norm{\mathrm{S}_{h}}_{L^{2}\rightarrow L^{2}} = O(h^{\frac{1}{2} - \epsilon})$ by Lemma \ref{estimate11} we obtain $\norm{\mathrm{S}_{h}^{j}w}_{L^{2}} = O\left(h^{(\frac{1}{2} - \epsilon)j}h^{\frac{1}{2} + \epsilon}\right)$ which implies $\norm{g}_{L^{2}(\Omega)} = O(h^{\frac{1}{2} + \epsilon})$. \\

\noindent \tre{To show that $g$ is in $H^m(\Omega)$, we rewrite \eqref{eqng} as
\begin{equation}\label{eqng2}
    g = - \sum_{j,k = 0}^{m-1}{\partial^{j-m}\left(e^{(\Phi - \bar{\Phi}) /h}A'_{j,k}\bar{\partial}^{k-m}\left(e^{(\bar{\Phi}-\Phi)/h}g\right)\right)}
      -\sum_{j,k = 0}^{m-1}{\partial^{j-m}\left(e^{(\Phi - \bar{\Phi}) /h}A'_{j,k}\bar{\partial}^{k}a\right)}. 
    \tag{\ref{eqng}*}
\end{equation}
Using the fact that $g \in L^2(\Omega)$ and $A'_{j,k} \in W^{j+k+1,p}(\Omega)$ in the right hand side of \eqref{eqng2} coupled with the mapping properties of the Cauchy operators (Lemma \ref{MPCO}), one can show that $g \in H^1(\Omega).$ Repeating the above argument now with $g \in H^1(\Omega)$ in the right hand side of \eqref{eqng2}, one obtains that $g \in H^2(\Omega)$. We proceed with this recursive argument and increase the regularity of $g$ with each repetition. Note that we can not apply the argument infinite times as the second term in the right hand side of \eqref{eqng2} has limited regularity of $H^{m+1}(\Omega)$. Thus, we conclude that $g \in H^m(\Omega).$ 
}
\end{proof}
\vspace{0.4cm}

\noindent \textbf{Proof of Theorem \ref{CGO}.}  Choose $g$ as in Proposition \ref{g} and let
\[
   r_h = \bar{\partial}^{-m}\left(e^{(\bar{\Phi}-\Phi)/h}g\right)
\]
as observed in \eqref{fsoln}. Clearly, $r_h \in H^{2m}(\Omega)$ and $\norm{r_h}_{H^{m}(\Omega)} = O(h^{\frac{1}{2}+\varepsilon})$. Then we see that $u = e^{\Phi/h}(a + r_h) \in H^{2m}(\Omega)$ where $\bar\partial^m a =0$ solves $\mathcal{L}u=0.$ This proves Theorem \ref{CGO}. \qed \\

The adjoint operator $\mathcal{L}^{*}$ has a similar form as the operator $\mathcal{L}.$ Hence, by following similar arguments, we can show that the adjoint equation has the same type of CGO solutions as given in Theorem \ref{CGO}.

\begin{remark}
    One can define an integral equation for $f$ instead of $g$ by substituting \eqref{gsoln} in \eqref{fsoln} and  following the above procedure one can obtain that $\norm{r_h}_{H^m(\Omega)} = O(h^{\frac{1}{2} - \varepsilon}).$ 
\end{remark}

\begin{remark} From the techniques used in the proof of Theorem \ref{CGO} one can even construct CGO solutions to the equation
    \[
    \mathcal{P}u  = \mathcal{L}u + \sum_{j = 0}^{m-1}{A_{j,m}\partial^{j}\bar\partial^{m}u} = \partial^{m}\bar{\partial}^{m}u   + \sum_{j,k = 0}^{m - 1}{A_{j,k}\partial^{j}\bar{\partial}^{k}u} + \sum_{j = 0}^{m-1}{A_{j,m}\partial^{j}\bar\partial^{m}u} = 0
    \]   
of the form \eqref{soln}.
But the difficulty lies in the construction of CGO solutions to the adjoint $P^*$ as the transport equation defining the amplitude of the CGO solution to $P^*v=0$ will depend on the coefficients $A_{j,m}.$
\end{remark}

\section{Proof of uniqueness of coefficients}\label{III}
\noindent In this section, we prove Theorem \ref{MT}. We start by deriving an integral identity.
\noindent Let $u_{1},v \in H^{2m}(\Omega)$ be such that
\begin{equation}\label{Integral2}
\mathcal{L}u_{1} = 0, \quad \tilde{\mathcal{L}}^{*}v = 0.
\end{equation}
By assuming $\mathcal{C}(\mathcal{L}) = \mathcal{C}(\tilde{\mathcal{L}})$ there exists $u_{2}\in H^{2m}(\Omega)$ satisfying 
\begin{equation}\label{u2eq}
		\begin{rcases}
		\begin{aligned}
  \tilde{\mathcal{L}}u_{2} & = 0\\
		u_2|_{\bd} &= u_1|_{\bd},  \\
  (\partial_\nu u_2)|_{\bd} &= (\partial_\nu u_1)|_{\bd}, \\
  \vdots \quad \quad &  \quad  \quad \vdots\\
(\partial_\nu^{(2m-1)} u_{2})|_{\bd} &= (\partial_\nu^{(2m-1)} u_{1})|_{\bd}.
		\end{aligned}
		\end{rcases}
	\end{equation}
Note that
\[		
\tilde{\mathcal{L}}(u_{1} - u_{2}) =  \sum_{j,k = 0}^{m-1}(\tilde{A}_{j,k} -  A_{j,k})\partial^{j}\bar{\partial}^{k}u_{1}.
\]
Now we use integration by parts and \eqref{u2eq} to obtain the following integral identity
\begin{align*}
0 & = \int_{\Omega}{(u_{1} - u_{2})\overline{\tilde{\mathcal{L}}^{*}v}\, dx}\\
& = \int_{\Omega}{\tilde{\mathcal{L}}(u_{1} - u_{2})\bar{v}\, dx}\\
& = \int_{\Omega}{\left[\sum_{j,k = 0}^{m-1}(\tilde{A}_{j,k} -  A_{j,k})\partial^{j}\bar{\partial}^{k}u_{1}\right]\bar v\, dx}.
\end{align*}
By our assumption \eqref{zerocond}, we get the following integral identity
\begin{equation}
  \sum_{j,k = 0}^{m-1}\left((-1)^{j}\int_{\Omega}{({\tilde{A}'}_{j,k} -  {A'}_{j,k})\bar{\partial}^{k}u_{1}\partial^{j}\bar{v}\, dx}\right) = 0. 
\end{equation}
where $\mathcal{L}(u_{1}) = 0$ and $\tilde{\mathcal{L}}^{*}(v) = 0$. \\

\noindent By using Theorem \ref{CGO} we consider $u_1$ and $v$ of the form
\begin{align}\label{uv}
\begin{split}
u_{1} & = e^{\Phi/h}(a + r_{h}),  \text{ where } \ \bar\partial^m a = 0,\\
v & = e^{-\Phi/h}(b + s_{h}),  \text{ where } \ \bar\partial^m b =0,
\end{split}
\end{align}
with $r_h$ and $s_h$ satisfy $\|r_{h}\|_{H^{m}(\Omega)} =  O(h^{\frac{1}{2} + \epsilon})$ and $\|s_{h}\|_{H^{m}(\Omega)} =  O(h^{\frac{1}{2} + \epsilon})$ for some $\epsilon > 0$.\\

\noindent By using $u_1$ and $v$, the integral identity takes the form 
\begin{align*}
 0 & = \sum_{j,k = 0}^{m-1}(-1)^{j} \int_{\Omega}{\left[e^{(\Phi - \bar{\Phi})/h}(\tilde{A}'_{j,k}- A'_{j,k}) \bar{\partial}^{k}a \partial^{j}\bar{b}\right] }\\
  & \quad + \sum_{j,k = 0}^{m-1}(-1)^{j} \int_{\Omega}{\left[e^{(\Phi - \bar{\Phi})/h}(\tilde{A}'_{j,k}- A'_{j,k}) (\bar{\partial}^{k}a \partial^{j}\bar{s}_{h} +\bar{\partial}^{k}r_{h} \partial^{j}\bar{b} + \bar{\partial}^{k}r_{h} \partial^{j}\bar{s}_{h}) \right]}
\end{align*}

{\color{black}
    \noindent From the assumption \eqref{zerocond} of Theorem \ref{MT}, we have the following identity
    \begin{multline}\label{UOC4}
    \sum_{j,k = 0}^{m-1}  \left( (-1)^j \int_{\Omega}{{ e^{(\Phi - \bar{\Phi})/h}(\tilde{A}'_{j,k}- A'_{j,k}) (\bar{\partial}^{k}a) (\partial^{j}\bar{b})}} \right)\\ = \sum_{j,k = 0}^{m-1}{C_{j,k}(z_{0})h e^{(\Phi(z_0) - \bar{\Phi}(z_0))/h}(\tilde A'_{j,k}(z_0) - {A}'_{j,k}(z_0))\bar{\partial}^{k}a(z_{0}) \partial^{j}\bar{b}(z_{0})} + o(h)
\end{multline}
where $C_{j,k}(z_{0})\neq 0$ for all $0\leq j,k \leq m-1$.
    \noindent To obtain \eqref{UOC4}, we proceed as in \cite[Theorem 3.3]{GT}.
    Let $B_{j,k} = \tilde{A}'_{j,k}- A'_{j,k} \in W^{j+k+1,p}(\Omega) \subset C^{j+k}(\bar{\Omega})$. We decompose $B_{j,k}$ as $\chi B_{j,k} + (1-\chi)B_{j,k}$ where $\chi \in C_0^\infty(\Omega)$ with $\chi(z_0)=1.$ Using the method of stationary phase we obtain the following identity
\begin{equation}\label{SPEq1}
    \begin{aligned}
    \sum_{j,k = 0}^{m-1}  &\left( (-1)^j \int_{\Omega}{{ e^{(\Phi - \bar{\Phi})/h} \chi B_{j,k} (\bar{\partial}^{k}a) (\partial^{j}\bar{b})}} \right)\\ 
    &= \sum_{j,k = 0}^{m-1}{C_{j,k}(z_{0})h e^{(\Phi(z_0) - \bar{\Phi}(z_0))/h} \chi(z_0) B_{j,k}(z_0)\bar{\partial}^{k}a(z_{0}) \partial^{j}\bar{b}(z_{0})} + o(h)\\
    &= \sum_{j,k = 0}^{m-1}{C_{j,k}(z_{0})h e^{(\Phi(z_0) - \bar{\Phi}(z_0))/h}(\tilde A'_{j,k}(z_0) - {A}'_{j,k}(z_0))\bar{\partial}^{k}a(z_{0}) \partial^{j}\bar{b}(z_{0})} + o(h).
\end{aligned}
\end{equation} 
    Next we use integration by parts to show that the contribution from the terms $(1-\chi)B_{j,k}$  is $o(h).$ Let $\{\chi_l\}$ be the partition of unity on $\Omega$ with $U_l = \text{supp}\{\chi_l\}$ and $\sum_l \chi_l =1$.   
\begin{equation}\label{SPEq2}
    \begin{aligned}
    \sum_{j,k = 0}^{m-1}  &\left( (-1)^j \int_{\Omega}{{ e^{(\Phi - \bar{\Phi})/h} \chi_l (1-\chi) B_{j,k} (\bar{\partial}^{k}a) (\partial^{j}\bar{b})}} \right)\\ 
    &= h \sum_{j,k = 0}^{m-1} \left( (-1)^j \int_{U_l}{{ e^{(\Phi - \bar{\Phi})/h} \partial \left( \frac{\chi_l (1-\chi) B_{j,k} (\bar{\partial}^{k}a) (\partial^{j}\bar{b})}{\partial (\Phi - \bar{\Phi})} \right)}} \right).
\end{aligned}
\end{equation}   
Since $\partial \left( \frac{\chi_l (1-\chi) B_{j,k} (\bar{\partial}^{k}a) (\partial^{j}\bar{b})}{\partial (\Phi - \bar{\Phi})} \right) \in L^1(\Omega)$, using the assumption \eqref{zerocond} and Riemann–Lebesgue lemma we conclude that the right-hand side of \eqref{SPEq2} is $o(h)$. Thus, \eqref{SPEq1}-\eqref{SPEq2} give \eqref{UOC4} .


}

\noindent Next, we use the fact that $\|r_{h}\|_{H^{m}(\Omega)} =  O(h^{\frac{1}{2} + \epsilon})$,  $\|s_{h}\|_{H^{m}(\Omega)} =  O(h^{\frac{1}{2} + \epsilon}),$ for some $\epsilon > 0$ and obtain the following estimate
\begin{equation}\label{UOC1}
\sum_{j,k = 0}^{m-1}(-1)^{j} \int_{\Omega}{\left[e^{(\Phi - \bar{\Phi})/h}(\tilde{A}'_{j,k}- A'_{j,k})\bar{\partial}^{k}r_{h} \partial^{j}\bar{s_{h}} \right]} = O(h^{1 + 2\epsilon}).
\end{equation}

\noindent Let $\tilde{r}_h$ and $\tilde{s}_h$ be such that
 \[
    \begin{aligned}
        r_h &= \bar{\partial}^{-m}\left(e^{(\bar{\Phi}-\Phi)/h} \tilde{r}_h\right), \\ 
        s_h &= \bar{\partial}^{-m}\left(e^{(\bar{\Phi}-\Phi)/h} \tilde{s}_h\right)
    \end{aligned}
 \]
 and they satisfy the equation \eqref{eqng} in place of $g$. Using this we have the following estimate
\begin{align*}
   &  \sum_{j,k = 0}^{m-1}(-1)^{j} \int_{\Omega}{\left[e^{(\Phi - \bar{\Phi})/h}(\tilde{A}'_{j,k}- A'_{j,k})\bar{\partial}^{k}r_{h} \partial^{j}\bar{b} \right]}\\
     & \quad  = \sum_{j,k = 0}^{m-1}(-1)^{j} \int_{\Omega}{\left[e^{(\Phi - \bar{\Phi})/h}(\tilde{A}'_{j,k}- A'_{j,k})\bar{\partial}^{k-m}\left(e^{(\bar{\Phi}-\Phi)/h}\tilde{r}_h\right)\partial^{j}\bar{b} \right]}\\
     & \quad  = \sum_{j,k = 0}^{m-1}(-1)^{j} \int_{\Omega}{\left[\bar{\partial}^{k-m}\left(e^{(\Phi - \bar{\Phi})/h}(\tilde{A}'_{j,k}- A'_{j,k})\partial^{j}\bar{b}\right)e^{(\bar{\Phi}-\Phi)/h}\tilde{r}_h \right]}\\
     & \quad \leq C h^{\frac{1}{2}+ \epsilon}\sum_{j,k = 0}^{m-1} \norm{(\tilde A'_{j,k}-A'_{j,k})\partial^{j}\bar{b} }_{W^{1,p}}\norm{\tilde{r}_h }_{L^{2}},
\end{align*}
where we have used Fubini's theorem in the second equality while the last inequality is obtained by applying Lemma \ref{inverseestimate}. Now, we apply Proposition \ref{g} to obtain
\begin{equation}\label{UOC2}
   \sum_{j,k = 0}^{m-1}(-1)^{j} \int_{\Omega}{\left[e^{(\Phi - \bar{\Phi})/h}(\tilde{A}'_{j,k}- A'_{j,k})\bar{\partial}^{k}r_{h} \partial^{j}\bar{b} \right]} = O(h^{1 + 2\epsilon}) .
\end{equation}
Similarly, we obtain
\begin{equation}\label{UOC3}
   \sum_{j,k = 0}^{m-1}(-1)^{j} \int_{\Omega}{\left[e^{(\Phi - \bar{\Phi})/h}(\tilde{A}'_{j,k}- A'_{j,k})\bar{\partial}^{k}a \partial^{j}\bar{s}_{h} \right]} = O(h^{1 + 2\epsilon}) .
\end{equation}

\noindent \textbf{Proof of Theorem \ref{MT}.} Using the estimates \eqref{UOC4} - \eqref{UOC3} and matching the asymptotics as $h \to 0$, we obtain
\begin{equation}\label{UQ}
 0 = \sum_{j,k = 0}^{m-1} C_{j,k}(z_{0}) (\tilde{A}'_{j,k}(z_0) - {A}'_{j,k}(z_0)) \bar{\partial}^{k}a(z_{0}) \partial^{j}\bar{b}(z_{0}).
\end{equation}
We now show that $A'_{0,0}=\tilde{A}'_{0,0}$. To this end, let us choose $a = b = 1$. With this choice we obtain
\[
    \tilde A'_{0,0}(z_0) = {A}'_{0,0}(z_0).
\]
Since for any $z_{0}\in\Omega$ we can choose $\Phi$ with a unique critical point at $z_{0}$, we have
\[
    \tilde A'_{0,0} = {A}'_{0,0} \quad \text{in} \ \Omega.
\]
\noindent Next to show that $A'_{0,1} = \tilde{A}'_{0,1},$ we rewrite \eqref{UQ} by setting the term $\tilde A'_{0,0} - {A}'_{0,0} = 0.$ Then, we choose $a=\bar{z}, b=1$ to obtain
\[
    A'_{0,1} = \tilde{A}'_{0,1},\quad \text{in} \ \Omega.
\]
Similarly, one can show $A'_{j,k} = \tilde{A}'_{j,k}$ in an increasing order for $j + k$ by choosing
\[
    a= \frac{\bar{z}^k}{k!} \ \text{ and } b = \frac{\bar{z}^j}{j!}\]
and applying the above procedure to obtain
\[
    \tilde{A}'_{j,k} = {A}'_{j,k} \ \text{in} \ \Omega \quad \text{for all} \ \ 0\leq j,k \leq m-1.
\]
From \eqref{eqn7}, we readily obtain that
\[
    \tilde{A}_{j,k} = {A}_{j,k} \ \text{in} \ \Omega \quad \text{for all} \ \ 0\leq j,k \leq m-1.
\]
This proves Theorem \ref{MT}.
\section*{Acknowledgements}
\noindent VPK would like to thank the Isaac Newton Institute for Mathematical Sciences, Cambridge, UK, for support and hospitality during \emph{Rich and Nonlinear Tomography - a multidisciplinary approach} in 2023 where part of this work was done (supported by EPSRC Grant Number EP/R014604/1). The authors thank Manas Kar for suggesting this problem and Masaru Ikehata for drawing our attention to the references \cite{Ik91,Ik93,Ik94}.

\bibliographystyle{alpha}
\bibliography{RA}

\end{document}